\documentclass[12pt]{amsart}
\usepackage[cp1251]{inputenc}
\usepackage[english,russian]{babel}
\usepackage{amsmath}
\usepackage{amssymb}
\usepackage{amsfonts, dsfont, graphicx, srcltx}

\usepackage{enumerate}

\def\udcs{517.574 : 517.576 : 517.550.4 : 517.547.2 : 517.518.244} 

\usepackage{enumerate}

\usepackage[pdftex,unicode,colorlinks,linkcolor=blue,
citecolor=red,bookmarksopen,pdfhighlight=/N]{hyperref}

\newtheorem{theorem}{Теорема}

\newtheorem{proposition}{Предложение}
\newtheorem*{ThL1}{Теорема 
B}
\newtheorem*{ThL}{Теорема Лиувилля}
\newtheorem*{mainlemma}{Основная лемма}
\newtheorem*{mainth}{Основная теорема}

\theoremstyle{definition}

\newtheorem*{rem}{Замечание}

\newcommand{\RR}{\mathbb{R}} 
\newcommand{\CC}{\mathbb{C}} 
\newcommand{\NN}{\mathbb{N}} 
 
\newcommand{\oB}{{\overline B}}

\DeclareMathOperator{\sbh}{{\sf sbh}}

\DeclareMathOperator{\ord}{{\sf ord}}

\DeclareMathOperator{\dd}{{\rm\,d\!}}

\textwidth 155mm

\begin{document}
УДК \udcs

\title[Теоремы типа Лиувилля для функций конечного порядка]
{Теоремы типа Лиувилля\\ вне малых исключительных множеств\\ для функций конечного порядка} 

\author{\large Б.Н.~Хабибуллин }
\address{Булат Нурмиевич Хабибуллин
\newline\hphantom{iii} Башкирский государственный университет,
 г.Уфа, Башкортостан, Россия}
\email{khabib-bulat@mail.ru}

\thanks{\copyright \ 2020 Хабибуллин Б.Н.}
\thanks{\rm Работа  выполнена в рамках реализации программы развития Научно-образовательного математического  центра Приволжского федерального округа, дополнительное соглашение № 075-02-2020-1421/1 к соглашению № 075-02-2020-1421.}
\thanks{\it Поступила 3 сентября 2020 г.}

\maketitle 
{
\small
\begin{quote}
\noindent{\bf Аннотация. } 
 Доказано, что выпуклые функции на вещественной прямой  $\RR$ и  субгармонические функции на $\RR^m$, $m>1$, конечного порядка, ограниченные сверху вне некоторого множества нулевой относительной лебеговой плотности, ограничены сверху всюду соответственно на $\RR$ или $\RR^m$. Отсюда субгармонические на комплексной плоскости $\CC$, целые и  плюрисубгармонические на $\CC^n$,  а также выпуклые   или  гармонические функции на $\RR^m$ конечного порядка, ограниченные сверху вне некоторого множества нулевой относительной лебеговой плотности, постоянны.    
\medskip

\noindent{\bf Ключевые слова:}{ целая функция, субгармоническая функция, плюрисубгармоническая функция, выпуклая функция, гармоническая функция, среднее по шару, функция конечного порядка, теорема Лиувилля}
\medskip 

\noindent{\bf MSC 2010:} 		32A15, 30D20,  31C10, 31B05, 31A05,	26B25, 	26A51
\end{quote}
}

Основа нашей заметки  ---  классическая для {\it целых,\/} т.\,е. голоморфных  на {\it комплексной плоскости\/} $\CC$ или  на $\CC^n$, где $n\in \NN:=\{1,2, \dots\}$,  функций    
\begin{ThL} Если целая  функция ограничена, то она постоянна.   
\end{ThL}
Такое же заключение верно и для {\it ограниченных сверху\/} субгармонических функций на $\CC$ \cite[следствие 2.3.4]{Rans} и, как очевидное  следствие,  плюрисубгармонических функций на $\CC^n$,  выпуклых функций на {\it вещественной прямой\/} $\RR$ и, как мгновенное следствие,  на $\RR^m$ при $1<m\in \NN$, а также гармонических функций на $\RR^m$ при любых $m\in \NN$ \cite[теорема 1.19]{HK}.

Недавно в работе \cite[лемма 4.2]{BarBelBor18} была дана  версия теоремы Лиувилля для  целых функций {\it конечного порядка\/} на $\CC$, ограниченных не всюду, а лишь вне некоторого малого множества $E\subset \CC$.
 В  \cite[лемма 4.2]{BarBelBor18a} её доказательство откорректировано, а перед её формулировкой в  \cite[преамбула теоремы 2.1]{AlemBarBelHed20} отмечается, что установлена она А.\,А.~Боричевым. 
Приведённые в  \cite{BarBelBor18} и \cite{BarBelBor18a} доказательства используют далеко не тривиальные факты и рассуждения теорий функций комплексного переменного.

\begin{ThL1}[{\rm (\cite[лемма 4.2]{BarBelBor18},  \cite[лемма 4.2]{BarBelBor18a},  \cite[теорема 2.1]{AlemBarBelHed20})}]  Если целая функция   конечного порядка  на $\CC$ ограничена вне  
множества $E\subset \CC$ нулевой плоской плотности по плоской мере Лебега $\lambda$ в том смысле, что определён предел
\begin{equation}\label{m00}
\lim_{r\to +\infty} \frac{\lambda \bigl(\{z\in E\colon |z|\leq r\}\bigr)}{r^2}=0, 
\end{equation}
то эта функция  постоянная.
\end{ThL1}

Основные результаты настоящей статьи развивают  и распространяют теорему B  на плюрисубгармонические и целые  функции на $\CC^n$ для всех  $n\in \NN$, а также  на выпуклые и гармонические функции на $\RR^m$.  При этом наше доказательство и для случая целых функций одной комплексной переменной проще и построено на подходе,
отличном от применявшегося в предшествующем доказательстве А.А. Боричева теоремы B. Так, это <<одномерное>> доказательство уже было представлено  в краткой заметке \cite{Kha20ar} с описанием его перспектив. 

Пусть функция $M$ со значениями в  {\it расширенной вещественной прямой\/} $\overline \RR:=\RR\cup\{\pm\infty\}$ определена   на положительной полуоси $\RR^+:=\{x\in \RR\colon x\geq 0\}$, 
  в $\RR^m$ или в $\CC^n$, отождествляемом с $\RR^{2n}$, с евклидовой нормой $|\cdot|$, но, вообще говоря,  вне некоторого замкнутого шара $\oB(r)$ ограниченного радиуса $r\in \RR^+$ с центром в нуле.   {\it Порядок\/}  функции $M$ (около $\infty$)  можно определить как
\begin{equation}\label{ord}
\ord[f]:=\limsup_{|x|\to \infty}\frac{\ln \bigl(1+M^+(x)\bigr)}{\ln |x|}\in \RR^+\cup \{+\infty\}, 
\end{equation} 
где $M^+\colon x\mapsto \max\{0, M(x)\}$ --- положительная часть функции $M$. Порядок целой функции $f$ на $\CC^n$ определяется как порядок $\ord\bigl[\ln |f|\bigr]$ {\it плюрисубгармонической\/} функции $\ln|f|$.  

{\it Относительной лебеговой плотностью\/} измеримого по мере Лебега $\lambda$ на $\RR^m$ подмножества $E\subset \RR^m$ называем величину  
\begin{equation}\label{mRs}
{\sf L}_m (E):=\lim_{r\to +\infty} \frac{\lambda \bigl(E\cap B(r)\bigr)}{r^{m}}, 
\end{equation}
если предел существует. Определение очевидным образом переносится на  $\CC^n$, отождествлённое с $\RR^{2n}$, как 
${\sf L}_{2n} (E)$.

\begin{mainth} Пусть $m\in \NN$ и  $E\subset \RR^m$ --- множество  нулевой относительной лебеговой плотности\/  ${\sf L}_m(E)=0$ в $\RR^m$.  Если субгармоническая  функция $v$ конечного порядка на $\RR^m$ ограничена  сверху  на $\RR^m\!\setminus\!E$, то 
\begin{equation}\label{s}
\sup_{\RR^m}v=\sup_{\RR^m\!\setminus\!E}v<+\infty.   
\end{equation}
\end{mainth}
 
Пусть $n\in \NN$. Функция на $\CC^n$ называется плюрисубгармонической, если её сужение на каждую комплексную прямую --- субгармоническая.    В частности, при $n=1$ эти понятия --- одно и то же, а каждая плюрисубгармоническая функция на $\CC^n$ и субгармоническая   на $\RR^{2n}$. По  нашей основной теореме  из  классических теорем Лиувилля для плюрисубгармонических и целых функций сразу следует 

\begin{theorem}\label{th1} Пусть $n\in \NN$  и $E\subset \CC^n$ --- множество  нулевой относительной лебеговой плотности в $\CC^n$ в  смысле \eqref{mRs} на $\RR^{2n}$, отождествлённом с $\CC^n$, т.\,е. ${\sf L}_{2n}(E)=0$.
Если плюрисубгармоническая или целая функция  конечного порядка на $\CC^n$ ограничена  сверху  на $\CC^n\!\setminus\!E$, то она  постоянная. 
\end{theorem}
Субгармонические функции на  $\RR$ --- это в точности выпуклые функции, а при $m\in \NN$ каждая выпуклая или гармоническая функция субгармоническая.  По  нашей основной теореме  из  классических теорем Лиувилля для выпуклых или  гармонических функций на $\RR^m$  сразу следует  
\begin{theorem}\label{th2} Пусть $m\in \NN$ и  $E\subset \RR^m$ --- множество  нулевой относительной лебеговой плотности в $\RR^m$. Если выпуклая или гармоническая функция  конечного порядка на $\RR^m$ ограничена  сверху  на $\RR^m\!\setminus\!E$, то она  постоянная. 
\end{theorem}

Таким образом, достаточно доказать основную теорему, к чему и переходим. 

Для $m\in \NN$,   $x\in \RR^m$ и $r\in \RR^+$ через   $\overline  B(x,r):=\{x' \in \RR^m \colon |x'-x|\leq r\}$ 
обозначаем {\it замкнутый шар\/} в $\RR^m$ {\it радиуса $r$ с центром $x$,\/} и, 
 как и прежде, $\overline  B(r):=\overline B(0,r)$.  Аналогично для $\CC^n$, отождествляемом с $\RR^{2n}$.  
Для  $\lambda$-интегрируемой функции $v\colon \overline B(x,r)\to  \overline \RR$   полагаем
\begin{equation}\label{Bv}
{\sf B}_v(x,r):=\frac{1}{\lambda\bigl(\overline B(x,r)\bigr)}\int_{\overline B(x,r)} v \dd \lambda=\frac{1}{b_mr^m}\int_{\overline B(x,r)} v \dd \lambda, \quad  {\sf B}_v(r):={\sf B}_v(0, r),
\end{equation}
где  $b_m$ --- {\it объём единичного шара.\/}  Это соответственно {\it средние функции $v$ по замкнутым шарам\/} $\overline B(x,r)$ и $\overline B(r)$.  
{\it Положительность\/}  понимается как $\geq 0$, {\it отрицательность\/} --- это $\leq 0 $.  

\begin{proposition}\label{1} Пусть $v$ --- положительная $\lambda$-измеримая функция на замкнутом шаре  $\overline B(R)\subset \RR^m$, $0<r<R$,  и $x\in \oB (r)$. Тогда
\begin{equation}\label{BB}
{\sf B}_v(x,R-r)\leq \Bigl(1+\frac{r}{R-r}\Bigr)^m  {\sf B}_v(R).
\end{equation} 
\end{proposition}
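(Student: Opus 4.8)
The plan is to reduce \eqref{BB} to two elementary observations: a set inclusion coming from the triangle inequality, and the monotonicity of the integral of a nonnegative function. No integrability hypothesis beyond $\lambda$-measurability and nonnegativity of $v$ will be needed.

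First I would check that the displaced ball $\oB(x,R-r)$ sits inside $\oB(R)$ whenever the centre satisfies $x\in\oB(r)$. Indeed, if $|x'-x|\leq R-r$, then by the triangle inequality $|x'|\leq |x|+|x'-x|\leq r+(R-r)=R$, so $x'\in\oB(R)$; hence $\oB(x,R-r)\subset\oB(R)$. Since $v\geq 0$ on $\oB(R)$, integrating over the smaller set can only decrease the integral, so
\begin{equation*}
\int_{\oB(x,R-r)} v \dd\lambda\ \leq\ \int_{\oB(R)} v \dd\lambda ,
\end{equation*}
an inequality that is valid in $\overline{\RR}$ even when the right-hand side is $+\infty$.

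Next I would divide by the appropriate normalizing volumes. By the definition \eqref{Bv},
\begin{equation*}
{\sf B}_v(x,R-r)=\frac{1}{b_m(R-r)^m}\int_{\oB(x,R-r)} v \dd\lambda\ \leq\ \frac{1}{b_m(R-r)^m}\int_{\oB(R)} v \dd\lambda=\Bigl(\frac{R}{R-r}\Bigr)^{m}{\sf B}_v(R),
\end{equation*}
and it only remains to rewrite the prefactor as $\bigl(R/(R-r)\bigr)^m=\bigl(((R-r)+r)/(R-r)\bigr)^m=\bigl(1+r/(R-r)\bigr)^m$, which is exactly \eqref{BB}.

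\textbf{Expected obstacle.} There is essentially none: the statement is a normalization bookkeeping around the inclusion $\oB(x,R-r)\subset\oB(R)$. The only point to keep in mind is the degenerate case where both integrals are infinite, in which case \eqref{BB} holds trivially in $\overline{\RR}$; the estimate will be invoked later precisely to transfer ball averages from shifted centres back to a single concentric ball.
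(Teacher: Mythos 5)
Your argument is correct and coincides with the paper's own proof: both rest on the inclusion $\oB(x,R-r)\subset\oB(R)$ for $x\in\oB(r)$, the monotonicity of the integral of the nonnegative function $v$, and the rewriting of the volume ratio $\bigl(R/(R-r)\bigr)^m$ as $\bigl(1+r/(R-r)\bigr)^m$. Your explicit verification of the inclusion via the triangle inequality is a small addition the paper leaves implicit; otherwise the two proofs are identical.
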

\begin{proof} По определению \eqref{Bv},  в силу положительности $v$ на $\oB(R)$ и включений $\oB (x,R-r)\subset 
\oB(R)$ для всех $x\in \oB(r)$ получаем
\begin{multline*}
{\sf B}_v(x,R-r)\overset{\eqref{Bv}}{=}\frac{1}{b_m(R-r)^m}\int_{\overline B(x,R-r)} v \dd \lambda
\leq \frac{1}{b_m(R-r)^m}\int_{\overline B(R)} v \dd \lambda\\
=\frac{b_mR^m}{b_m(R-r)^m}\frac{1}{b_mR^m}\int_{\overline B(R)} v \dd \lambda
\overset{\eqref{Bv}}{=} \Bigl(1+\frac{r}{R-r}\Bigr)^m  {\sf B}_v(R),
\end{multline*}
что и требовалось для \eqref{BB}.
\end{proof}

Через $\sbh(S)$ обозначаем класс всех {\it субгармонических\/} ({\it локально выпуклых} при $m=1$) {\it функций\/} на каких-либо открытых  окрестностях множества $S\subset \RR^m$.   
Роль средних по шару из \eqref{Bv}  для субгармонических функций обусловлена полностью характеризующим их, при условии полунепрерывности сверху  и локальной интегрируемости по мере Лебега $\lambda$, неравенством о среднем по шару \cite{Rans}, \cite{HK}: 
\begin{equation}\label{inav}
v(x)\leq {\sf B}_v(x,r) \quad\text{при $v\in \sbh\bigl(\oB(x,r)\bigr)$}.
\end{equation}

\begin{proposition}\label{2}
Пусть $v$ --- субгармоническая функция на замкнутом шаре $\overline B(R)\subset \RR^m$,  $r\in (0,R)$ и $E\subset \overline B(r)$ --- $\lambda$-измеримое множество. Тогда 
\begin{equation}\label{v+}
\int_{E}v\dd \lambda \leq 
\Bigl(1+\frac{r}{R-r}\Bigr)^m \lambda (E){\sf B}_{v^+}(R).
\end{equation}
\end{proposition}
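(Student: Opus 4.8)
The plan is to derive a pointwise upper bound for $v$ on $E$ from the sub-mean value inequality \eqref{inav} together with Proposition~\ref{1}, and then simply integrate that bound over $E$. So the proof will be short, built entirely from the two tools already established.

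First, fix $x\in E$. Since $E\subset\oB(r)$, every $x'\in\oB(x,R-r)$ satisfies $|x'|\le|x'-x|+|x|\le(R-r)+r=R$, hence $\oB(x,R-r)\subset\oB(R)$ and $v$ is subharmonic on $\oB(x,R-r)$. Therefore \eqref{inav} gives $v(x)\le{\sf B}_v(x,R-r)$. Because $v\le v^+$ pointwise and the averaging operator in \eqref{Bv} is monotone, ${\sf B}_v(x,R-r)\le{\sf B}_{v^+}(x,R-r)$; and since $v^+$ is nonnegative and $\lambda$-measurable on $\oB(R)$, Proposition~\ref{1} applied to $v^+$ yields ${\sf B}_{v^+}(x,R-r)\le\bigl(1+\frac{r}{R-r}\bigr)^m{\sf B}_{v^+}(R)$. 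Chaining the three inequalities gives
\[
v(x)\le\Bigl(1+\frac{r}{R-r}\Bigr)^m{\sf B}_{v^+}(R)\qquad\text{for every }x\in E,
\]
and the right-hand side is a finite constant, because a subharmonic function on $\oB(R)$ is bounded above there, so ${\sf B}_{v^+}(R)<+\infty$; in particular $v$ is bounded above on $E$.

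Integrating this estimate over the $\lambda$-measurable set $E$ produces $\int_E v\dd\lambda\le\bigl(1+\frac{r}{R-r}\bigr)^m{\sf B}_{v^+}(R)\,\lambda(E)$, which is exactly \eqref{v+} (the inequality being trivially true in the degenerate case $\int_E v\dd\lambda=-\infty$, which cannot in fact occur here but does no harm). I do not expect a genuine obstacle in this argument: the only points that warrant a moment's care are the inclusion $\oB(x,R-r)\subset\oB(R)$ that licenses the use of \eqref{inav}, the monotone passage from $v$ to $v^+$ in the average, and the $\lambda$-measurability of $v^+$ that is needed for Proposition~\ref{1} to apply.
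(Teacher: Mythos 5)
Your proof is correct and follows essentially the same route as the paper's: the sub-mean value inequality \eqref{inav}, the monotone passage from $v$ to $v^+$ in the average, Proposition~\ref{1} applied to $v^+$, and integration over $E$. The only cosmetic difference is that you chain all three pointwise inequalities before integrating, whereas the paper integrates the bound $v(x)\leq {\sf B}_{v^+}(x,R-r)$ first and then applies Proposition~\ref{1} under the integral sign; the content is identical.
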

\begin{proof} Из неравенства \eqref{inav} о среднем по шару получаем 
\begin{equation*}
v(x)\leq {\sf B}_{v}(x,R-r) \leq {\sf B}_{v^+}(x,R-r) \quad \text{для каждой точки $x\in \oB (r)$}.
\end{equation*}
Интегрирование крайних частей этого неравенства по мере Лебега $\lambda$ на множестве $E$ даёт неравенство 
\begin{equation*}
\int_{E}v\dd \lambda \leq \int_{E} {\sf B}_{v^+}(x,R-r) \dd \lambda (x).
\end{equation*}
Отсюда, по неравенству \eqref{BB} предложения \ref{1}, применённому к  подынтегральному выражению с положительной функцией $v^+$ в последнем интеграле, получаем 
\begin{equation*}
\int_{E}v\dd \lambda \leq \int_{E}\Bigl(1+\frac{r}{R-r}\Bigr)^m  {\sf B}_{v^+}(R)\dd \lambda (x)
=\Bigl(1+\frac{r}{R-r}\Bigr)^m {\sf B}_{v^+}(R)\lambda(E),
\end{equation*}  
что и даёт \eqref{v+}. 
\end{proof}

 \begin{mainlemma} Пусть $v$ --- субгармоническая функция на  шаре $\overline B(R)\subset \RR^m$. Тогда 
для любого числа $r\in (0,R)$ и для любого $\lambda$-измеримого подмножества $E\subset \overline B(r)$ имеет место неравенство
\begin{equation}\label{v}
{\sf B}_v(r)\leq \frac{1}{b_mr^m}\int_{\oB(r)\!\setminus\!E}v\dd \lambda
+\frac{1}{b_m}\Bigl(1+\frac{r}{R-r}\Bigr)^m\frac{\lambda (E)}{r^m}{\sf B}_{v^+}(R).
\end{equation}
 \end{mainlemma}
\begin{proof} По определению \eqref{Bv}
\begin{equation*}
{\sf B}_v(r)= \frac{1}{b_mr^m}\int_{B(r)\!\setminus\!E}v\dd \lambda
+\frac{1}{b_mr^m}\int_{E}v\dd \lambda,
\end{equation*}
откуда по неравенству \eqref{v+} предложения \ref{2},  применённому к последнему интегралу, получаем в точности  требуемое \eqref{v}.
\end{proof}
 
\begin{proof}[Доказательство основной теоремы]
Положим \begin{equation}\label{M}
 M:=\sup_{\RR^m\!\setminus\!E} v\in \RR. 
\end{equation}
По условию ограниченности сверху   на $\RR^m\!\setminus\!E$ функции $v$ 
можно рассмотреть субгармоническую функцию  $v-M$, отрицательную на $\RR^m\!\setminus\!E$.  Применим теперь основную лемму при произвольных $0<r\in \RR^+$ с $R=2r$ и  с множеством-пересечением $E\cap \oB (r)\subset \oB (r)$ в роли множества  $E$ к субгармонической функции $(v-M)^+\geq 0$, где первый интеграл в правой части \eqref{v} будет равен нулю, а в итоге получим
  \begin{multline*}
{\sf B}_{(v-M)^+}(r)\leq \frac{1}{b_m}\Bigl(1+\frac{r}{2r-r}\Bigr)^m\frac{\lambda \bigl(E\cap \oB(r)\bigr)}{r^m}{\sf B}_{(v-M)^+}(2r)\\
=\frac{2^m}{b_m}\frac{\lambda \bigl(E\cap \oB(r)\bigr)}{r^m}{\sf B}_{(v-M)^+}(2r) 
 \quad\text{при всех $0<r\in \RR^+$}.
\end{multline*}
Отсюда по условию 
${\sf L}_m (E)\overset{\eqref{mRs}}{=}0$ для функции 
\begin{equation}\label{Bm}
r\underset{\text{\tiny $0<r\in \RR^+$}}{\longmapsto} {\sf B}_{(v-M)^+}(r)\in \RR^+
\end{equation}
 имеем 
\begin{equation}\label{+}
{\sf B}_{(v-M)^+}(r)=o\bigl({\sf B}_{(v-M)^+}(2r)\bigr)
\quad \text{при $r\to+\infty$}.
\end{equation} 
Функция \eqref{Bm} конечного порядка $\ord[{\sf B}_{(v-M)^+}]\in \RR^+$, поскольку  $\ord[(v-M)^+]\in \RR^+$ ввиду
конечности порядка $\ord[v]$. Следовательно,  \eqref{+} возможно только в случае  ${\sf B}_{(v-M)^+}\equiv 0$, и, как следствие $(v-M)^+\equiv 0$.   Это вместе с \eqref{M} даёт  \eqref{s}. \end{proof}

\begin{rem}
Условие нулевой лебеговой плотности ${\sf L}_m(E)=0$  в основной теореме и в теореме \ref{th2}, как и то же самое 
 с $m:=2n$ в теореме \ref{th1}, можно заменить на формально   более слабое условие: {\it существует неограниченная последовательность положительных чисел $(r_k)_{k\in \NN}$, для которой
\begin{equation*}
\limsup_{k\to \infty} \frac{r_{k+1}}{r_k}<+\infty
\quad\text{и при этом}\quad
\lim_{k\to \infty} \frac{\lambda \bigl(E\cap B(r_k)\bigr)}{r_k^{m}}=0.
\end{equation*}
} 
Формально, поскольку последнее влечёт за собой  ${\sf L}_m(E)=0$. 
\end{rem}


\end{document}